\documentclass[a4paper,10pt,reqno]{amsart}
\usepackage{graphicx}
\usepackage{subfigure}
\usepackage[utf8]{inputenc}
\usepackage{amsmath,amssymb}
\usepackage{verbatim}
\usepackage[colorlinks=true]{hyperref}
\usepackage[usenames,dvipsnames]{color}
\usepackage{xspace}
\usepackage{bm}
\usepackage{nicefrac}

\newtheorem{thm}{Theorem}[section]
\newtheorem{prop}[thm]{Proposition}
\newtheorem{lemma}[thm]{Lemma}
\newtheorem*{lemma*}{Lemma}
\newtheorem{cor}[thm]{Corollary}

\theoremstyle{definition}
\newtheorem{defi}[thm]{Definition}
\newtheorem{rmk}[thm]{Remark}

\newcommand{\beginfig}{\begin{figure}[htbp]\centering}

\def\coleq{\mathrel{\mathop:}=}

\def\sp{\operatorname{span}}
\def\Id{\operatorname{Id}}
\newcommand{\ket}[1]{|#1\rangle}
\newcommand{\SymV}{\mathrm{Sym}^{\ell}V}
\newcommand{\res}{\operatornamewithlimits{Res}}

\title[The higher spin 6-vertex model and Macdonald polynomials]{The higher spin generalization of the 6-vertex model with domain wall boundary conditions and Macdonald polynomials}
\author{Tiago Fonseca}
\address{Centre de Recherches Mathématiques\\Université de Montréal}
\curraddr{LAPTh and CNRS\\9 chemin de Bellevue\\BP 110\\74941 Annecy-le-Vieux Cedex\\France}
\email{tiago.dinis.da.fonseca\,@\,sapo.pt}

\author{Ferenc Balogh}
\address{Concordia University, Centre de Recherches Mathématiques}
\curraddr{SISSA\\via Bonomea, 265\\34136 Trieste\\Italia} 
\email{fbalogh\,@\,sissa.it}

\begin{document}

\begin{abstract}
The determinantal form of the partition function of the $6$-vertex model with domain wall boundary conditions was given by Izergin.
It is known that for a special value of the crossing parameter the partition function reduces to a Schur polynomial.

Caradoc, Foda and Kitanine computed the partition function of the higher spin generalization of the $6$-vertex model.
In the present work it is shown that for a special value of the crossing parameter, referred to as the combinatorial point, the partition function reduces to a Macdonald polynomial.

\end{abstract}

\maketitle

\section*{Introduction}
The $6$-vertex model with domain wall boundary conditions was introduced by Korepin in \cite{Kor}, where the partition function was shown to satisfy certain recursion relations.
In \cite{Iz-6V}, Izergin solved Korepin's recursion relations with the result being a determinantal formula.

An interesting feature of this model is that it has multiple combinatorial interpretations
since its configurations are in bijection with several combinatorial objects, such as alternating sign matrices, fully packed loops, and states of a square ice model \cite{Bressoud, MRR-ASM, MRR-TSSCPP, dG-review}. 
For example, Kuperberg~\cite{Kup-ASM} used this model to compute the number of alternating sign matrices.

\medskip

The $6$-vertex model is an integrable model, meaning that it possesses an $R$--matrix which satisfies the Yang--Baxter equation. It is this algebraic structure that allows the partition function to be calculated explicitly.

To each edge of the $6$-vertex model a $\nicefrac{1}{2}$ spin is associated, since the corresponding $R$-matrix in intimately connected with the representation theory of the algebra $sl_2$. It is natural to search for generalizations of this model, where the representation of the underlying algebra or even the algebra itself are replaced (for example, by $sl_r$).
The idea is to construct an $R$--matrix for the model, that satisfies the Yang--Baxter equation (see, e.g.~\cite{PM-19vertex}).

If we restrict ourselves to studying general irreducible representations of $sl_2$, there is a systematic way of constructing the $R$--matrix, referred to as \emph{fusion}~\cite{Fusion-KRS,Fusion-Reshetikhin}.
The idea comes from the simple fact that in the representation theory of $sl_2$ we can build the spin $\nicefrac{\ell}{2}$ representation through the fusion of $\ell$ spin $\nicefrac{1}{2}$ representations.
This was achieved, for the 6-vertex model with domain-wall boundary conditions, in the work of Caradoc, Foda and Kitanine~\cite{CFKitanine-6V}, which serves as the basis of our paper.

\medskip

The partition function of the $6$-vertex model, as defined in Section~\ref{sec:def-6v}, is a multivariate polynomial which is symmetric in two separate sets of variables (known as \emph{spectral parameters}). It also depends on an extra parameter, normally denoted by $q$, and referred to as the \emph{crossing parameter}.

If we set $q=\exp(2\pi i/3)$, known as the \emph{combinatorial point}, the partition function becomes symmetric in the two sets of variables as a whole, and it simplifies to a Schur polynomial corresponding to a staircase partition (see~\cite{Oka,Stroganov-IK-sym}).

The main goal of the present work is to prove an analogous result for the higher spin generalization 
of the six-vertex model with domain wall boundary conditions: by setting $q=\exp (2\pi i/(2\ell+1))$, the partition function reduces to a Macdonald polynomial corresponding to a staircase partition. As a consequence, the partition function is a symmetric polynomial in the full set of spectral parameters.

\subsection*{Outline of the paper}

The first two sections are introductory: 
Section~\ref{sec:def-6v} gives the definition of the $6$-vertex model with domain-wall boundary conditions and the explicit determinantal form of the partition function. In Section~\ref{sec:fusion} the higher spin generalization of the $6$-vertex model and the analogous representation of its partition function are presented.

The following two sections contain the original results of the paper:
Section~\ref{sec:Prop_Z} presents an alternative representation of the partition function in terms of determinants of scalar products of rational functions, and this reformulation is used to prove some basic but important properties of the partition function, valid for all values of $q$. In Section~\ref{sec:main} we prove that the partition function satisfies the wheel condition when $q=\exp(2\pi i/(2\ell+1))$, and, by using the results of ~\cite{FJMM-Macdonald}, we show that there is a well-defined unique Macdonald polynomial satisfying the same vanishing constraints. It is shown that, up to a multiplicative constant, there is a unique polynomial with the prescribed degrees and symmetries that satisfies the wheel condition, and therefore the partition function and the Macdonald polynomial coincide, up to an explicit multiplicative constant. The proof of the uniqueness lemma and the calculation of the proportionality constant are left to the appendices.

\section{Review of the $6$-vertex model}\label{sec:def-6v}

In this section we give a brief description of the $6$-vertex model, on a square grid, with domain wall boundary conditions, following the construction presented in~\cite{tese}.

\subsection{Definition of the model}

Take a square grid of size $n \times n$, in which each edge is given an orientation (an arrow), such that at each vertex there are two incoming and two outgoing arrows, which gives six possibilities. Alternatively, the arrows can be represented by signs according to the rule that arrows pointing right or upward correspond to plus signs and arrows pointing downward or left correspond to minus signs. Impose the \emph{domain wall boundary conditions} prescribing that the arrows at the top and bottom boundaries are outgoing and the ones at the left and right boundaries are incoming. See, e.g., Figure~\ref{fig:6v}.

\beginfig
\begin{center}
\includegraphics{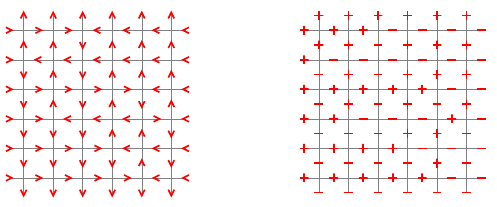}
\end{center}
\caption{A $6\times 6$ configuration of the $6$-vertex model in terms of arrows (left) or signs (right) \label{fig:6v}}
\end{figure}

To each vertex configuration we assign a weight
\begin{equation}
w(x,y)
=\begin{cases}
 a(x,y) =qx-q^{-1}y\\
 b(x,y) = x-y\\
 c(x,y) = (q-q^{-1})\sqrt{xy}\ ,
\end{cases}
\end{equation}
according to Figure~\ref{fig:poids}. The parameter $q$ is called the \emph{crossing parameter} of the model, while the parameters $x$ and $y$, called \emph{spectral parameters}, depend on the row and the column of the vertex, respectively. Let $\bm{x}=\{x_1,\ldots,x_n\}$ and $\bm{y}=\{y_1,\ldots,y_n\}$ be the horizontal and vertical spectral parameters, respectively.

\beginfig
\includegraphics{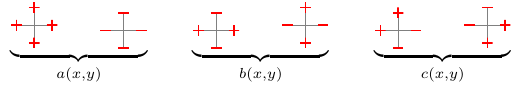}
\caption{Weights of vertex configurations \label{fig:poids}}
\end{figure}

The weight of a configuration is defined by the product of the weights of the vertices.
The partition function is defined as the sum of the weights over all possible configurations:
\begin{align}\label{eq:Z_sum}
 Z_n (\bm{x},\bm{y}) \coleq \sum_{\text{configurations}}\prod_{i,j=1}^n w_{ij} (x_i,q y_j)\ .
\end{align}

The partition function is renormalized in the following way:
\begin{equation}\label{eq:Z_renorm}
 \mathcal{Z}_n (\bm{x},\bm{y})= (-1)^{\binom{n}{2}} q^{- \nicefrac{n^2}{2}} (q-q^{-1})^{-n} \left(\prod_{i=1}^n x_i^{-\nicefrac{1}{2}}y_i^{-\nicefrac{1}{2}} \right)Z_n (\bm{x},\bm{y})\ .
\end{equation}
The function $\mathcal{Z}_n (\bm{x},\bm{y})$ is an homogeneous polynomial of total degree $n(n-1)$ and of partial degree $n-1$ in each variable $x_i$ or $y_i$.

\subsection{Integrability} Let $V$ be the standard representation of $sl_2$ spanned by the eigenvectors $|+\rangle$ and $|-\rangle$ of $S^z$. The key ingredient to the exact solvability of the $6$-vertex model is the \emph{R-matrix}
\begin{equation}
R(x,y):=
   \left(
   \begin{array}{cccc}
     a & 0 & 0 & 0 \\
     0 & b & c & 0 \\
     0 & c & b & 0 \\
     0 & 0 & 0 & a
   \end{array}
      \right)
\end{equation}
representing an endomorphism
\begin{align}
\nonumber
R \colon V \otimes V &\to V\otimes V\\
\label{eq:R-matrix}
\ket{\epsilon_1} \otimes \ket{\epsilon_2} &\mapsto R_{\epsilon_1 \epsilon_2}^{\epsilon_3 \epsilon_4} \ket{\epsilon_3} \otimes \ket{\epsilon_4} 
\end{align}
where $\epsilon_i \in \{+,-\}$. In what follows, we consider the vector space $\bigotimes_{k=1}^{n} V_k$, where each $V_k$ is a labelled copy of $V$. We use the abbreviated notation
\begin{equation}
|\epsilon_1 \epsilon_2\cdots\epsilon_{n}\rangle := |\epsilon_1\rangle\otimes |\epsilon_2\rangle\otimes \cdots \otimes |\epsilon_{n}\rangle\ , \qquad \epsilon_i = \pm \quad (i=1,2,\dots, n)
\end{equation}
for canonical basis in the tensor product representation.  The matrix $R_{ij}$ stands for the map that acts as $R$ on $V_i\otimes V_j$ and as identity elsewhere.

 The action of the matrix $R$ can be interpreted as completing an allowed sign configuration at a vertex with prescribed left and bottom signs, as shown below:
\begin{center}
\includegraphics{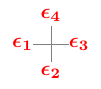}
\end{center}

The  $R$-matrix satisfies the \emph{Yang-Baxter equation}:
\begin{equation}
\label{eq:ybe}
 R_{23} (y_2,y_3) R_{13} (y_1,y_3) R_{12} (y_1,y_2)=
 R_{12} (y_1,y_2) R_{13} (y_1,y_3) R_{23} (y_2,y_3)\ ,
\end{equation}

and the \emph{inversion equation}:
\begin{align}
\label{eq:inversion}
R_{21}(y,x)R_{12}(x,y) = (qy-q^{-1}x)(qx-q^{-1}y) \Id\ . 
\end{align}

The \emph{transfer matrix} of the model is defined as
\begin{equation}
T(x,\bm{y}) = {}_{0}\langle - | R_{0n} (x,q y_n) \ldots R_{02} (x,q y_2) R_{01} (x,q y_1) 
 |+ \rangle_0\ ,
\end{equation}
where the matrix $R_{0i}$ acts on the tensor product of the $i^{\text{th}}$ space and the so-called auxiliary space $V_0$. In terms of the transfer matrix, the partition function is given by:
\begin{equation}\label{eq:Z_Tr_R}
 Z_n (\bm{x},\bm{y}) = \langle ++\cdots +
    |T(x_1,\bm{y}) T(x_2,\bm{y}) \ldots T(x_n,\bm{y}) |--\cdots -\rangle\ .
\end{equation}

Using the Yang--Baxter equation, the renormalized partition function $\mathcal Z_n(\bm x, \bm y)$, defined in Equation~\eqref{eq:Z_renorm}, is shown to be the so-called Korepin--Izergin determinant~\cite{Kor, Iz-6V}
\begin{align}\label{eq:IK_det}
 \mathcal{Z}_n (\bm{x},\bm{y}) = \frac{\prod_{i,j} (x_i-q y_j)(x_i-q^{-1}y_j)}{\Delta(\bm x)\Delta(\bm y)} \det \left| \frac{1}{(x_i-q y_j)(x_i-q^{-1}y_j)}\right|_{i,j=1}^n\ ,
\end{align}
where $ \Delta(\bm x)=\prod_{1\leq i<j\leq n} (x_i-x_j)$.
\subsection{Combinatorial point}
When $q=\exp(2\pi i/3)$, the Korepin--Izergin determinant~\eqref{eq:IK_det} dramatically simplifies and becomes a Schur polynomial~\cite{Oka,Stroganov-IK-sym}:
\begin{equation}
 \mathcal{Z}_n (\bm{x},\bm{y}) = s_{\delta_n} (\bm{x},\bm{y})\ ,
\end{equation}
where $\delta_n=(n-1,n-1,n-2,\ldots,2,1,1,0,0)$.
It follows that the partition function $\mathcal{Z}_n (\bm x, \bm y)$, at the combinatorial point, is a fully symmetric polynomial in the $2n$ variables $\{\bm x, \bm y\}$.

\section{Fusion and the higher spin generalization of the $6$-vertex model}
\label{sec:fusion}

We introduce the higher spin generalization of the $6$-vertex model considered in this paper. The corresponding $R$-matrix is constructed using fusion techniques for the representations of $sl_2$, as briefly explained below  (see~\cite{Fusion-Reshetikhin}). 
\subsection{Fusion and the generalized $R$-matrix}
The representation $\SymV$  is the irreducible component of $V^{\otimes \ell}$ spanned by the vectors
\begin{equation}
 |\ell;\ell-m\rangle \coleq \frac{1}{m!} \left(S^-\right)^m |\underbrace{+ +\ldots + }_{\ell} \rangle\qquad m=0,1,\dots, \ell
\end{equation}
(see~\cite{FH}). The $R$-matrix \eqref{eq:R-matrix} can be used to build an endomorphism of the vector space
\begin{equation}
\label{eq:decomp}
V^{\otimes \ell} \otimes V^{\otimes \ell} = V_1 \otimes \ldots \otimes V_{\ell} \otimes V_{\ell+1} \otimes \ldots \otimes V_{2\ell}\ ,
\end{equation}
with spectral parameters
\begin{equation}
\label{eq:spectral_p}
\{x,q^2 x,\ldots,q^{2\ell-2}x,y,q^2 y,\ldots,q^{2\ell-2}y\}
\end{equation}
associated to the $2\ell$ factors of the tensor product decomposition \eqref{eq:decomp}.
\begin{defi} The operator
\begin{equation}
R^{(\ell)} (x,y) \ \colon \ V^{\otimes \ell} \otimes V^{\otimes \ell} \to V^{\otimes \ell} \otimes V^{\otimes \ell}\ 
\end{equation}
is defined by
\begin{align}
\label{eq:r_l_def}
 R^{(\ell)} (x,y) & \coleq R_{1,2\ell}(x,q^{2\ell-2}y) R_{1,2\ell-1}(x,q^{2\ell-4}y) \ldots R_{1,\ell+1} (x,y)\\
\nonumber
 & \qquad \times R_{2,2\ell} (q^2 x,q^{2\ell-2}y) R_{2,2\ell-1}(q^2 x,q^{2\ell-4}y) \ldots R_{2,\ell+1} (q^2 x,y)\\
 \nonumber
 & \qquad \ldots \\ 
\nonumber
 & \qquad \times R_{\ell,2\ell} (q^{2\ell-2} x,q^{2\ell-2}y) R_{\ell,2\ell-1}(q^{2\ell-2} x,q^{2\ell-4}y) \ldots R_{\ell,\ell+1} (q^{2\ell-2} x,y)\ .
\end{align}
\end{defi}

Given that $R$ satisfies the Yang--Baxter equation~\eqref{eq:ybe} and the inversion relation~\eqref{eq:inversion}, it can be shown that so does $R^{(\ell)}$:
\begin{prop}
The matrix $R^{(\ell)}(x,y)$ satisfies the Yang--Baxter equation
\begin{equation}
\label{eq:ybe_l}
 R^{(\ell)}_{23} (y_2,y_3) R^{(\ell)}_{13} (y_1,y_3) R^{(\ell)}_{12} (y_1,y_2)=
 R^{(\ell)}_{12} (y_1,y_2) R^{(\ell)}_{13} (y_1,y_3) R^{(\ell)}_{23} (y_2,y_3)
\end{equation}
and the inversion equation 
\begin{equation}
\label{eq:inversion_l}
 R^{(\ell)} (y,x) R^{(\ell)} (x,y) \propto \Id \ .
\end{equation}
\end{prop}

Note that~\eqref{eq:ybe_l} and~\eqref{eq:inversion_l} hold with general spectral parameters; the special choice~\eqref{eq:spectral_p} allows the operator $R^{(\ell)}$ to be restricted to the subspace $\SymV\otimes \SymV$, as shown below.

\begin{prop} \label{prop:sym_iff}
 A state $|v\rangle \in V^{\otimes \ell}$ belongs to $\SymV$ if and only if
\begin{equation}
 R_{i,i+1} (q^{2i-2} x, q^{2i} x) |v\rangle = 0
\end{equation}
for all $1\leq i\leq \ell-1$.
\end{prop}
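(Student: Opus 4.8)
The plan is to reduce the statement to Proposition~\ref{prop:sym_Vl}, which already characterizes membership in $V^{(\ell)}$ as invariance under every adjacent transposition. The bridge is to show that the specialized two--site operator $R_{i,i+1}(q^{2i-2}x,q^{2i}x)$ is, up to an overall nonzero scalar, nothing but $P_{i,i+1}-\mathrm{Id}$, where $P_{i,i+1}$ denotes the permutation swapping the factors $V_i$ and $V_{i+1}$. Once this identification is in place, both directions of the equivalence follow simultaneously.

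First I would evaluate the weights at the specialization. Writing $u=q^{2i-2}x$, so that $q^{2i}x=q^2u$, I compute from the formulas for $a,b,c$ that $a(u,q^2u)=qu-q^{-1}(q^2u)=0$, that $b(u,q^2u)=u-q^2u=-(q^2-1)u$, and that $c(u,q^2u)=(q-q^{-1})\sqrt{u\cdot q^2u}=(q^2-1)u$ for a fixed choice of branch of the square root. The vanishing of $a$ is the crucial observation: it is precisely the reason these particular spectral parameters were chosen.

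Next I would substitute these values into the $4\times4$ matrix form of the $R$--matrix. In the basis $\{|{+}{+}\rangle,|{+}{-}\rangle,|{-}{+}\rangle,|{-}{-}\rangle\}$ the diagonal entries $a$ vanish, while the central $2\times2$ block becomes $(q^2-1)u\begin{pmatrix}-1&1\\1&-1\end{pmatrix}$, so that
\[
 R_{i,i+1}(q^{2i-2}x,q^{2i}x)=(q^2-1)u\,\bigl(P_{i,i+1}-\mathrm{Id}\bigr).
\]
Since $P_{i,i+1}-\mathrm{Id}$ annihilates exactly the states fixed by $P_{i,i+1}$, that is, exactly the states symmetric under exchange of positions $i$ and $i+1$, I conclude that $R_{i,i+1}(q^{2i-2}x,q^{2i}x)|v\rangle=0$ holds if and only if $|v\rangle$ is symmetric in positions $i$ and $i+1$. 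Imposing this for all $1\le i\le\ell-1$ is precisely the hypothesis of Proposition~\ref{prop:sym_Vl}, which yields $|v\rangle\in V^{(\ell)}$, and conversely every state of $V^{(\ell)}$ satisfies all these symmetries.

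The computation is routine, and I do not expect a genuine obstacle: once $a$ is seen to vanish the identification with the permutation operator is immediate. The only point requiring a little care is the branch of $\sqrt{uv}$ entering $c$, but since a different choice only rescales the operator by a nonzero constant it leaves the kernel unchanged, and hence does not affect the argument.
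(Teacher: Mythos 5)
Your proof is correct and follows essentially the same route as the paper: specialize the weights so that $a$ vanishes, observe that the resulting $4\times 4$ matrix is a nonzero multiple of $P_{i,i+1}-\mathrm{Id}$ (the paper writes the matrix explicitly rather than naming it as such), and conclude via Proposition~\ref{prop:sym_Vl}. The only implicit assumption, shared with the paper, is that the overall scalar $(q^2-1)q^{2i-2}x$ is nonzero, which holds for generic $q$ and $x\neq 0$.
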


\begin{proof}
It is enough to show that the operator $R_{i,i+1} (q^{2i-2} x, q^{2i}x)$ annihilates a state $|v\rangle$ if and only if $|v\rangle$ is invariant under exchanging positions $i$ and $i+1$, since the $R$-matrix is a local operator. The $R$-matrix for the special choice of spectral parameters $(q^{2i-2}x,q^{2i}x)$ reads as

\begin{equation}
 R_{i,i+1}(q^{2i-2}x,q^{2i}x) = (q^2-1) q^{2i-2}x
   \left(
   \begin{array}{cccc}
     0 & 0 & 0 & 0 \\
     0 & -1 & 1 & 0 \\
     0 & 1 & -1 & 0 \\
     0 & 0 & 0 & 0
   \end{array}
      \right)\ ,
\end{equation}
whose kernel is exactly $\mathrm{Sym}^{2}V\subset V\otimes V$.
\end{proof}

\begin{lemma}
 The operator $R^{(\ell)}$ leaves $\SymV\otimes \SymV$ invariant and therefore the map
\begin{equation}
 R^{(\ell)} (x,y): \SymV\otimes \SymV \to \SymV\otimes \SymV
\end{equation}
is well-defined.
\end{lemma}

\begin{proof}
Let $|v\rangle \otimes |w\rangle \in \SymV\otimes \SymV$. Note that the commutation relation
\begin{multline}
\label{eq:comm}
 R_{i,i+1}(q^{2i-2}x,q^{2i}x) \left(R^{(\ell)}(x,y) |v\rangle \otimes |w\rangle \right)\\=
 R^{(\ell)}(x,y) \left(R_{i,i+1} (q^{2i-2}x,q^{2i}x)|v\rangle\right) \otimes |w\rangle
\end{multline}
holds for $1\leq i\leq \ell-1$, as a consequence of the Yang--Baxter equation \eqref{eq:ybe} applied to \eqref{eq:r_l_def}. 

Since $|v\rangle$ belongs to $\SymV$, the r.h.s.~of~\eqref{eq:comm} vanishes for $1\leq i\leq \ell-1$. Therefore, by Proposition \ref{prop:sym_iff}, the vector $R^{(\ell)}(x,y) |v\rangle \otimes |w\rangle$ is symmetric in its first $\ell$ factors. A similar argument shows that $R^{(\ell)}(x,y) |v\rangle \otimes |w\rangle$ is also symmetric in its last $\ell$ factors, and hence $R^{(\ell)}(x,y) |v\rangle \otimes |w\rangle \in \SymV \otimes \SymV$.
\end{proof}

\subsection{Higher spin model}

Take a $n\times n$ grid as in the $6$-vertex model above, and to each edge associate an integer $0 \leq \alpha \leq \ell$, which labels the corresponding state $|\ell; \alpha \rangle \in \SymV$. We assume \emph{generalized domain wall boundary conditions}, that is, $\ell$ is assigned to the edges on the left and top boundaries and $0$ is fixed along the edges of the bottom and right boundaries. As in the $6$-vertex model, $2n$ spectral parameters $\{\bm x,\bm y\}$ are associated to the vertical and horizontal lines of the grid (see Figure \ref{fig:boundary_sp_p}).
\beginfig
\label{fig:boundary_sp_p}
\includegraphics{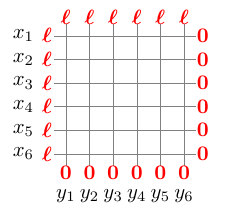}
\caption{Boundary conditions and spectral parameters for the $6\times 6$ grid}
\end{figure}

Analogously to the $6$-vertex model, the following conservation condition is imposed on a vertex configuration:
\begin{center}
\includegraphics{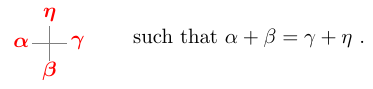}
\end{center}
By using the standard notation
\begin{equation}
R^{(\ell)} (x_i,y_j)|\ell;\alpha\rangle \otimes |\ell;\beta\rangle = \left.R^{(\ell)}\right._{\alpha,\beta}^{\gamma,\eta}(x_i,y_j) |\ell;\gamma\rangle \otimes |\ell;\eta\rangle\ ,
\end{equation}
the weight of a vertex configuration is given by
\begin{equation}
\label{eq:l_spin_weights}
 w_{i,j}(x_i,y_j) = \left.R^{(\ell)}\right._{\alpha,\beta}^{\gamma,\eta}(x_i,qy_j)\ .
\end{equation}
This choice guarantees the integrability of the model.

\subsection{Partition function}
The partition function $Z_{n,\ell} (\bm x, \bm y)$ of the spin $\ell$ model is defined exactly as in~\eqref{eq:Z_sum}, where now the weights $w_{ij}$ are those given in \eqref{eq:l_spin_weights}.
The fusion process, defining  $R^{(\ell)}$ from $R$, allows to express the partition function of the spin $\ell$ model in terms of the original $6$-vertex model partition function as
\begin{equation}
\label{eq:l_spin_vs_old}
 Z_{n,\ell} (\bm x,\bm y) = Z_{\ell n} (\bm{\bar x},\bm{\bar y})\ ,
\end{equation}
where
\begin{equation}
\bm{\bar{x}} = \{x_1,q^2 x_1,\ldots,q^{2\ell-2}x_1,\ldots,x_n,q^2 x_n,\ldots,q^{2\ell-2}x_n\}\ ,
\end{equation}
and $\bm{\bar{y}}$ is defined similarly. 

As in \eqref{eq:Z_renorm}, the function
\begin{equation}
\mathcal{\hat Z}_{n,\ell}(\bm x,\bm y)=\prod_i^n \left(x_i y_i\right)^{-\nicefrac{\ell}{2}}Z_{n,l}(\bm x, \bm y)
\end{equation}
is an homogeneous polynomial in the variables $x_i$ and $y_i$. Moreover, $\mathcal{\hat Z}_{n,\ell}(\bm x,\bm y)$ is divisible by the product
\begin{equation}
{\prod_{i,j}^n \prod_{p=0}^{\ell-2} \prod_{k=0}^{\ell-1} (q^{2k}x_i-q^{2p+1}y_j)}\ ,
\end{equation}
as a consequence of the Korepin--Izergin formula \eqref{eq:IK_det} evaluated at $\bm{\bar x}$ and $\bm{\bar y}$.
The reduced partition function is defined as
\begin{equation}\label{eq:redef_Znl}
 \mathcal{Z}_{n,\ell}(\bm{x},\bm{y})\coleq \frac{\prod_{i,j=1}^n \prod_{p=0}^{\ell} \prod_{k=0}^{\ell-1} (q^{2k}x_i-q^{2p-1}y_j)}{\Delta(\bm{\bar{x}})\Delta(\bm{\bar{y}})} \det \mathcal{A}_{\ell} (\bm{x},\bm{y}) 
\end{equation}
where $\mathcal{A}_{\ell} (\bm{x},\bm{y})$ is the $\ell n\times \ell n$ matrix given by:
\begin{align}
 \mathcal{A}_{\ell} (\bm{x},\bm{y}) &\coleq \left[A_{\ell} (x_{\alpha},y_{\beta})\right]_{\alpha,\beta=1}^n
\end{align}
with $\ell\times \ell$ blocks of the form
\begin{align}
 A_{\ell} (x,y) &\coleq \left[\frac{1}{(q^{2j} x-q^{2i-1} y)(q^{2j}x-q^{2i+1} y)}\right]_{i,j=0}^{\ell-1}\ .
\end{align}
Note that we recover the Korepin--Izergin determinant when $\ell =1$.

The original partition function can be written as
\begin{equation}
Z_{n,\ell}(\bm x,\bm y) = \text{Const.} \prod_i^n \left(x_i y_i\right)^{\nicefrac{\ell}{2}}\prod_{i,j}^n \prod_{p=0}^{\ell-2} \prod_{k=0}^{\ell-1} (q^{2k}x_i-q^{2p+1}y_j) \mathcal{Z}_{n,\ell} (\bm x,\bm y)\ ,
\end{equation}
where the constant can be determined explicitly.

\section{Alternative representation of the partition function}\label{sec:Prop_Z}
For rational functions $u(z)$ and $v(z)$ the \emph{residue pairing} is defined as
\begin{equation}
 \left\langle u(z),v(z) \right\rangle = - \res_{z=\infty} u(z)v(z)dz \ ,
\end{equation}
in terms of which the Korepin--Izergin formula \eqref{eq:IK_det} can be presented as
\begin{equation}
\label{eq:IK_scalar_pr_1}
\mathcal{Z}_n(\bm x,\bm y) = \frac{1}{\Delta(\bm x)\Delta(\bm y)}\det\left(\left\langle p_i(\bm x; z),\frac{1}{z-y_j} \right\rangle\right)_{i,j=1}^{n}\ ,
\end{equation}
where
\begin{equation}
 p_{i} (\bm x;z) = \prod_{k=1\atop k\neq i}^{n} (z-qx_k)(z-q^{-1}x_k)\ , \qquad i=1,\dots, n\ .
\end{equation}
In what follows, we use the following alternative representation of the partition function:
\begin{prop}
\label{prop:IK_scalar_pr}
The standard $6$-vertex model partition function $\mathcal{Z}_n(\bm x,\bm y)$ can be written as
\begin{equation}
\label{eq:IK_scalar_pr_2}
\mathcal{Z}_n(\bm x,\bm y) = \det\left(\left\langle r_i(\bm x;z),\frac{z^{j-1}}{w(\bm y; z)} \right\rangle\right)_{i,j=1}^{n}\ ,
\end{equation}
where
\begin{equation}
w(\bm y;z) =\prod_{k=1}^{n}(z-y_k)\ , 
\end{equation}
and 
\begin{equation}
r_{i} (\bm x;z) = \frac{w(\bm x;qz)(q^{-1}z)^{i-1} - w(\bm x; q^{-1}z)(qz)^{i-1}}{(q-q^{-1})z}\ , \qquad i=1,\dots, n\ .
\end{equation}
\end{prop}
\begin{proof} First observe that if $\Delta(\bm y)\not=0$ then
\begin{equation}
\sp\left\{\frac{1}{z-y_j}\right\}_{j=1}^{n} = \sp\left\{\frac{z^{j-1}}{w(\bm y;z)}\right\}_{j=1}^{n}\ ,
\end{equation}
and the change of basis is given explicitly as
\begin{equation}
\frac{z^{i-1}}{w(\bm y;z)}= \sum_{i=1}^{n}M_{ij}(\bm y)\frac{1}{z-y_j}\ , \qquad i=1,\dots, n\ ,
\end{equation}
where $M$ is the $n\times n$ matrix
\begin{equation}
\label{eq:matrix_m}
M_{ij}(\bm y) =  \frac{y_j^{i-1}}{\prod_{k \not=j}(y_j-y_k)}\ , \qquad i,j=1,\dots n\ .
\end{equation}
Similarly, if $\Delta(\bm x)\not=0$ then
\begin{equation}
\sp\left\{p_j(\bm x;z)\right\}_{j=1}^{n} = \sp\left\{r_j(\bm x;z)\right\}_{j=1}^{n}\ ,
\end{equation}
and the change of basis can be written, using again \eqref{eq:matrix_m}, as
\begin{equation}
r_i(\bm x;z) = \sum_{j=1}^{n}M_{ij}(\bm x)p_j(\bm x,z)\ .
\end{equation}
To conclude the proof, it is enough to note that
\begin{equation}
\det(M(\bm x)) = \frac{(-1)^{\binom{n}{2}}}{\Delta(\bm x)}\ .
\end{equation}
\end{proof}
\begin{lemma}
The higher spin partition function is given by
\begin{equation}
\label{eq:higher_r_det}
\mathcal{Z}_{n,\ell}(\bm x,\bm y) = \det\left(\left\langle \tilde r_i(\bm x; z),\frac{z^{j-1}}{w(\bar{\bm y};z)} \right\rangle\right)_{i,j=1}^{n\ell}\ ,
\end{equation}
where
\begin{equation}
\tilde r_i(\bm x;z) = q^{n(\ell-1)}\frac{w(\bm x;qz)(q^{-1}z)^{i-1} - w(\bm x; q^{-2\ell+1}z)(qz)^{i-1}}{(q-q^{-1})z}\ ,
\end{equation}
for $1\leq i \leq n\ell$\ .
\end{lemma}
\begin{proof}
Define the polynomial
\begin{equation}
\pi(\bm x;z) =q^{n(\ell-1)^2}\prod_{k=1}^{\ell-1}w(\bm x;q^{-2k+1}z)\ .
\end{equation}
Observe that
\begin{align}
w(\bar{\bm x};qz) &=q^{n(\ell-1)}\pi(\bm x; z)w(\bm x; qz)\\
w(\bar{\bm x};q^{-1}z) &=q^{n(\ell-1)}\pi(\bm x; z)w(\bm x; q^{-2\ell+1}z)\ ,
\end{align}
and hence
\begin{equation}
r_i(\bar{\bm x};z) = \pi(\bm x; z) \tilde r_i (\bm x; z)\ , \qquad i=1,\dots, n\ell\ .
\end{equation}
The identity \eqref{eq:higher_r_det} follows by factoring out the product
\begin{equation}
\prod_{j=1}^{n}\prod_{k=0}^{\ell-1}\pi(x;q^{2k}y_j)
\end{equation}
of the l.h.s. of \eqref{eq:IK_scalar_pr_2}.
\end{proof}

\begin{prop}
\label{prop:Z_l_properties}
The partition function $\mathcal{Z}_{n,\ell} (\bm{x},\bm{y})$
satisfies the following properties:

\begin{enumerate}
\item\label{item:homogen}  $\mathcal{Z}_{n,\ell} (\bm{x},\bm{y})$ is an homogeneous polynomial in the set of variables $\{\bm{x},\bm{y}\}$,
\item  $\mathcal{Z}_{n,\ell}(\bm{x},\bm{y})$ is  symmetric  in the variables $\bm{x}$ and in the variables $\bm{y}$,
\item $\mathcal Z_{n,\ell} (\bm x, \bm y) = \mathcal Z_{n, \ell} (\bm y, \bm x)$,
\item $\mathcal{Z}_{n,\ell}(\bm{x},\bm{y})$ has total degree at most $\ell n(n-1)$,
\item $\mathcal{Z}_{n,\ell}(\bm{x},\bm{y})$ has partial degree at most $\ell(n-1)$ in each variable $x_i$ or $y_i$.
\end{enumerate}
\end{prop}

\begin{proof}
The exchange symmetry (iii) follows easily from the representation \eqref{eq:redef_Znl}. The determinantal representation \eqref{eq:higher_r_det} shows that $Z_n(\bm x,\bm y)$ is symmetric homogeneous polynomial in $\bm x$, and therefore in $\bm y$. The total degree of the partition function can be read off from \eqref{eq:redef_Znl}.

To compute the partial degree of $\mathcal Z_{n, \ell} (\bm x, \bm y)$ in the variable $y_1$, note that \eqref{eq:higher_r_det} can be slightly modified as
\begin{equation}
\label{eq:higher_r_det_m}
\mathcal{Z}_{n,\ell}(\bm x,\bm y) = \frac{(-1)^{\binom{n\ell}{2}}}{\Delta(\bar{\bm y})}\det\left(\left\langle \tilde r_i(\bm x; z),\frac{1}{z-\bar{y}_j} \right\rangle\right)_{i,j=1}^{n\ell}\ ,
\end{equation}
where the only the first $\ell$ columns of the determinant depend on $y_1$. The degree of the polynomial $\tilde r_i(\bm x;z)$ in $z$ is equal to $n+i-2$ and therefore the $i$th row in the first $n\ell \times \ell$ block of the matrix in
\eqref{eq:higher_r_det_m} consists of polynomials of partial degree $n+i-2$ in the variable $y_1$. This means that the highest possible exponent of $y_1$ appearing in the determinant \eqref{eq:higher_r_det_m} is equal to $\sum_{k=1}^{\ell}(n+n\ell-k-1)=n\ell(\ell+1)-\frac{1}{2}\ell(\ell+3)$. To conclude (v), it is enough to recall that the partial degree of $\Delta(\bar{\bm y})$ in the variable $y_1$ is $\frac{1}{2}\ell(\ell-1)+(n-1)\ell^2$.
\end{proof}
\begin{rmk} In general, the partition function $\mathcal{Z}_{n,\ell} (\bm{x},\bm{y})$ is not fully symmetric in the set of $2n$ variables $\{\bm{x},\bm{y}\}$.
\end{rmk}
\section{The combinatorial point}\label{sec:main}

In this section we present our main result: at the combinatorial point, $q = \exp(2 \pi i /(2 \ell+1))$, the partition function $\mathcal Z_{n,\ell} (\bm x, \bm y)$ is a certain Macdonald polynomial.

\subsection{The main result}

Let $\ell \delta_n$ be the staircase partition with $n$ steps $2 \times \ell$, that is $\ell \delta_n = (\ell(n-1),\ell( n-1), \ldots,\ell,\ell,0,0)$.
Notice that the total degree and the partial degree of the partition function $\mathcal Z_{n,\ell} (\bm x, 	\bm y)$ are equal to, respectively, $|\ell \delta_n| = \ell n (n-1)$ and the first part of $\ell \delta_n$. 
Let
\begin{equation}
  \rho_\ell = e^{\nicefrac{2\pi i}{(2\ell+1)}}\ .
\end{equation}

\begin{thm}\label{thm:main}
At the combinatorial point $q=\rho_\ell$, the partition function $\mathcal Z_{n, \ell} (\bm x, \bm y)$ is a Macdonald polynomial ~\cite{Macdonald}, up to a multiplicative constant, more precisely
\begin{equation}
 \mathcal{Z}_{n,\ell} (\bm{x},\bm{y}) = \gamma_{n,\ell} P_{\ell \delta_n} (\bm{x},\bm{y};\rho_\ell^2,\rho_\ell)\ .
\end{equation}
\end{thm}

The proportionality constant $\gamma_{n,\ell}$ will be given explicitly in Proposition~\ref{prop:propto}.

\begin{rmk}
 The coefficients $u_{\lambda \mu}(q,t)$ in the expansion of the Macdonald polynomial  
 \begin{equation}\label{eq:Macdonald_expansion}
   P_\lambda (\bm z; q, t) = \sum_{\mu \leq \lambda} u_{\lambda\mu}(q,t) m_{\mu} (\bm z)
 \end{equation}
 are rational functions of $q$ and $t$ and therefore the specialization $q = \rho_\ell^2$ and $t = \rho_\ell$ has to be done carefully. This issue is addressed in Subsection~\ref{sec:wheel_macdonald}.
\end{rmk}

Theorem~\ref{thm:main} has the following important consequence:
\begin{cor}
 At the combinatorial point $q=\rho_\ell$, the partition function $\mathcal{Z}_{n,\ell} (\bm{x},\bm{y})$ is a fully symmetric polynomial.
\end{cor}

In order to prove Theorem~\ref{thm:main}, we show below that the partition function $\mathcal Z_{n, \ell} (\bm x, \bm y)$ satisfies a set of constraints, known as the wheel condition, see Theorem~\ref{thm:wheel_condition}.
The wheel condition together with the properties described in Proposition~\ref{prop:Z_l_properties} is sufficient to characterize the partition function up to a multiplicative constant, see Lemma~\ref{lemma:uniqueness}.
Using a result of Feigin et al, we prove that there is a unique Macdonald polynomial of total degree $\ell n(n-1)$ which also satisfies the wheel condition, see Proposition~\ref{prop:FJMM}. Therefore the uniqueness result of Lemma~\ref{lemma:uniqueness} implies  Theorem~\ref{thm:main}.

\subsection{The wheel condition}
Let $\bm z = \{z_1, \ldots, z_{2n}\}$.
\begin{defi}[Wheel condition]
 Let $q$ and $t$ be such that $q^{r-1} t^{k+1} = 1$, for some non-negative integers $k$ and $r$.
 A function $f(\bm{z})$ is said to obey the $(r,k)_{q,t}$-wheel condition if $f(\bm z)$ vanishes whenever
\begin{equation}
 \frac{z_{i_{\alpha+1}}}{z_{i_{\alpha}}}= t q^{s_{\alpha}} \quad \text{for any}\  s_{\alpha}\in \mathbb{N} \ \text{such that}\ \sum_{\alpha=1}^k s_{\alpha}\leq r-1\ ,
\end{equation}
and for any choice of $1\leq i_1 < i_2 < \ldots < i_{k+1} \leq 2n$.
\end{defi}

 \begin{lemma}\label{lemma:wheel_condition_xxx}
  Let $n \geq 3$. At the combinatorial point $q = \rho_\ell$, the partition function $\mathcal Z_{n, \ell} (\bm x, \bm y)$ vanishes whenever
   \begin{equation}
     x_3 = q^{1+2s_2} x_2 = q^{2+2s_1+2s_2} x_1
   \end{equation}
  for any $s_1$, $s_2 \in \mathbb N$ such that $s_1+s_2 \leq \ell-1$.
 \end{lemma}

\begin{proof}
 Let $\mathcal S = \sp\left\{r_i (\bm {\bar x}; z)\right\}_{i=1}^{\ell n}$. 
 The polynomial 
 \begin{multline}
   a(\bm{\bar x}; z) = \left(\prod_{i=4}^n \prod_{k = 0}^{\ell-1} (z-q^{2 k}x_i)\right) \left( \prod_{k=0}^{s_1-1} (z-q^{2k} x_1)\right)\\
   \left( \prod_{k=s_1}^{s_1+s_2-1} (z-q^{1+2k}x_1) \right) \left(\prod_{k=s_1+s_2}^{\ell-2} (z-q^{2+2k}x_1) \right)\ ,
\end{multline}
 of degree $\ell(n-2) -1$, is such that 
\begin{equation}
  \frac{w(\bm{\bar x}; qz) a(\bm{\bar x}; q^{-1}z)-w(\bm{\bar x}, q^{-1}z)a(\bm{\bar x}; qz)}{(q-q^{-1})z} = 0\ .
\end{equation}
Thus $\dim \mathcal S < \ell n$.
 This implies that the matrix appearing in Equation~\eqref{eq:higher_r_det} is singular.
\end{proof}

\medskip
\noindent

 \begin{lemma}\label{lemma:wheel_condition_xxy}
 Let $n \geq 2$. At the combinatorial point $q = \rho_\ell$,
  the partition function $\mathcal Z_{n, \ell} (\bm x, \bm y)$ vanishes whenever
   \begin{equation}
     y_1 = q^{1+2s_2} x_2 = q^{2+2s_1+2s_2} x_1
   \end{equation}
  for any $s_1$, $s_2 \in \mathbb N$ such that $s_1+s_2 \leq \ell-1$.
 \end{lemma}

 \begin{proof}
 Notice that $\bar y_{\ell-s_1-s_2+j} = q^{2j-1} x_1$ for $0 \leq j\leq s_1$, because $y_1 = q^{2+2s_1+2s_2}x_1$. 
 Let $\tilde{\mathcal S} = \sp\left\{\tilde r_i (\bm x; z)\right\}_{i=1}^{\ell n}$ and let 
 \begin{equation}
   a_i (x; z) = z^{i-1}\prod_{j=0}^{s_1-1} (z-q^{2j} x)\ ,
 \end{equation}
 for $1 \leq i \leq \ell n - s_1$.
 Then the polynomials
 \begin{equation}
   r^\prime_i (\bm{\bar x};z) = \frac{w(\bm x;qz)a_i (x_1; q^{-1}z)-w(\bm x;q^{-2\ell+1}z)a_i (x_1; qz)}{(q-q^{-1})z}\ , 
 \end{equation}
 belonging to $\tilde{\mathcal S}$, vanish at $z= q^{2j-1} x_1$ for $0 \leq j \leq s_1$.

 Let $A_{ij} = \left\langle \tilde r_i (\bm x; z), \frac{1}{z-\bar y_j} \right\rangle$, then the partition function is given by
 \begin{equation}
  \mathcal{Z}_{n,\ell}(\bm x,\bm y) = \frac{(-1)^{\binom{\ell n}{2}}}{\Delta (\bm{\bar y})}\det\left( A_{ij}\right)_{i,j=1}^{n\ell}\ .
 \end{equation}
 Let $\mathcal S^{\prime} = \sp\{r_i^\prime (\bm x;z)\}_{i=1}^{\ell n-s_1}$ be a subspace of $\tilde{\mathcal S}$, and let $A_{ij}^\prime = \left\langle r_i^\prime (\bm x; z), \frac{1}{z-\bar y_j} \right\rangle$.
 The entries $A^\prime_{ij}$ vanish when $\ell-s_1-s_2 \leq j\leq \ell -s_2$, then $A^\prime$ is of rank at most $\ell n -s_1-1$.
 Therefore $A$ is of rank at most $\ell n -1$.
 \end{proof}

 \begin{lemma}\label{lemma:wheel_condition_xyy}
  Let $n \geq 2$. At the combinatorial point $q = \rho_\ell$, the partition function $\mathcal Z_{n, \ell} (\bm x, \bm y)$ vanishes whenever
   \begin{equation}
     y_2 = q^{1+2s_2} y_1 = q^{2+2s_2+2s_2} x_1
   \end{equation}
  for any $s_1$, $s_2 \in \mathbb N$ such that $s_1+s_2 \leq \ell-1$.
 \end{lemma}

 \begin{proof}
  This follows from Lemma~\ref{lemma:wheel_condition_xxy}, by using the symmetries of the partition function.
 \end{proof}

 \begin{lemma}\label{lemma:wheel_condition_yyy}
  Let $n \geq 3$. At the combinatorial point $q = \rho_\ell$, the partition function $\mathcal Z_{n, \ell} (\bm x, \bm y)$ vanishes whenever
   \begin{equation}
     y_3 = q^{1+2s_2} y_2 = q^{2+2s_1+2s_2} y_1
   \end{equation}
  for any $s_1$, $s_2 \in \mathbb N$ such that $s_1+s_2 \leq \ell-1$.
 \end{lemma}

 \begin{proof}
  Recall that $\mathcal Z_{n,\ell} (\bm x, \bm y) = \mathcal Z_{n,\ell} (\bm y, \bm x)$, and hence this follows from Lemma~\ref{lemma:wheel_condition_xxx}.  
 \end{proof}
By summarizing the above, we obtain the following crucial property of the partition function at the combinatorial point:
\begin{thm}\label{thm:wheel_condition}
 At the combinatorial point $q = \rho_\ell$, the partition function $\mathcal{Z}_{n,\ell} (\bm{x},\bm{y})$ satisfies the $(\ell,2)_{\rho_\ell^2,\rho_\ell}$-wheel condition.
\end{thm}

\begin{defi}\label{def:Vn}
 Let $\bm x = \{x_1, \ldots, x_n \}$ and $\bm y = \{ y_1,\ldots, y_n\}$. 
 The vector space $V_n$ is defined as the space of polynomials $p (\bm x, \bm y)$ such that
\begin{enumerate}
 \item $p (\bm x, \bm y)$ is an homogeneous polynomial in the set of variables $\{\bm x,\bm y\}$,
 \item $p (\bm x, \bm y)$ is symmetric in the variables $\bm x$, and also in the variables $\bm{y}$,
 \item $p(\bm x, \bm y) = p (\bm y, \bm x)$,
 \item $p (\bm x, \bm y)$ has total degree at most $\ell n (n-1)$,
 \item $p (\bm x, \bm y)$ has partial degree at most $\ell (n-1)$ in each variable $x_i$ or $y_i$,
 \item $p(\bm x, \bm y)$ satisfies the $(\ell,2)_{\rho_\ell^2, \rho_\ell}$-wheel condition.
\end{enumerate}
\end{defi}

\begin{lemma}\label{lemma:uniqueness}
 The vector space $V_n$ is at most one dimensional.
\end{lemma}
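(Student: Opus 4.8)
The plan is to prove the statement by induction on $n$, building a reduction that forces $V_n$ to be at most one-dimensional once $V_{n-1}$ is. The base case $n=1$ is immediate: for $n=1$ the prescribed total and partial degrees are both $0$, and the wheel condition is vacuous because it requires three indices among only $2n=2$ variables, so $V_1$ consists of constants and $\dim V_1\le 1$. I stress that I only need the upper bound here; exhibiting a nonzero element of $V_n$ is the business of the existence part, supplied separately by the partition function itself.

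To set up the recursion I would view a given $\mathcal P\in V_n$ as a polynomial in the single variable $x_n$, which by Lemma~\ref{lemma:degree} has degree at most $\ell(n-1)$, and specialize $x_n$ to the resonance values $x_n=q^{2s+1}y_n$ for $s=0,\dots,\ell-1$. The first task is to show that the $(\ell,2)$-wheel condition (Lemma~\ref{lemma:wheel_condition}) forces each such specialization to factor as
\[
 \mathcal P(\bm x,\bm y)\big|_{x_n=q^{2s+1}y_n}=\Phi_s(\bm x',\bm y';y_n)\,\tilde{\mathcal P}_s(\bm x',\bm y'),
\]
where $\bm x'=\{x_1,\dots,x_{n-1}\}$, $\bm y'=\{y_1,\dots,y_{n-1}\}$, the factor $\Phi_s$ is an explicit product absorbing all of the $y_n$-dependence, and the cofactor $\tilde{\mathcal P}_s$ lies in $V_{n-1}$. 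A degree bookkeeping makes this consistent, since the cofactor must carry total degree $\ell(n-1)(n-2)$, which is exactly the value prescribed for $V_{n-1}$. I would then check the remaining defining properties of $V_{n-1}$ for $\tilde{\mathcal P}_s$: homogeneity and the partial-degree bound follow from the factorization, the two separate symmetries are inherited provided $\Phi_s$ is chosen symmetrically in $\bm x'$ and in $\bm y'$, and the $(\ell,2)$-wheel condition for $\tilde{\mathcal P}_s$ holds because any resonant triple among the surviving $2(n-1)$ variables is already a resonant triple for $\mathcal P$.

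The decisive step is an interpolation count. By the symmetry of $\mathcal P$ in $\bm y$, the factorization at $x_n=q^{2s+1}y_n$ fixes, for every $j$, the value of $\mathcal P$ at $x_n=q^{2s+1}y_j$; letting $s$ range over $0,\dots,\ell-1$ and $j$ over $1,\dots,n$ produces $\ell n$ prescribed evaluation points for the degree-$\le\ell(n-1)$ polynomial $\mathcal P(x_n)$. Since $\ell n>\ell(n-1)$, a polynomial in $V_n$ whose cofactors $\tilde{\mathcal P}_s$ all vanish has more roots in $x_n$ than its degree permits, hence vanishes identically in $x_n$ and, by the symmetries, identically. This gives injectivity of the combined reduction $\mathcal P\mapsto(\tilde{\mathcal P}_0,\dots,\tilde{\mathcal P}_{\ell-1})$ and, taken at face value, bounds $\dim V_n$ by $\ell\,\dim V_{n-1}$. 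To sharpen this to $\dim V_n\le 1$ I would show that the cofactors are not independent: using the inductive hypothesis $\dim V_{n-1}\le 1$, write $\tilde{\mathcal P}_s=a_s(\mathcal P)\,W$ for a fixed spanning element $W$, and prove that the ratios $a_s/a_0$ are \emph{universal}, i.e. determined by $n,\ell,q$ alone and not by $\mathcal P$. Granting this, matching $a_0$ between any two elements $\mathcal P,\mathcal Q\in V_n$ by a single scaling matches all $a_s$, so $\mathcal P-\mathcal Q$ annihilates all $\ell n$ interpolation points and therefore vanishes, closing the induction.

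The main obstacle is the factorization together with this last universality. Proving that the wheel condition alone forces $\mathcal P|_{x_n=q^{2s+1}y_n}$ to be divisible by the explicit product $\Phi_s$, with cofactor satisfying \emph{all} conditions of $V_{n-1}$, is exactly where the special value $q=e^{2\pi i/(2\ell+1)}$ enters: the $\ell$ resonance values $q^{2s+1}$ are distinct precisely because $q$ is a primitive $(2\ell+1)$-th root of unity, and I expect this case-by-case analysis — paralleling the proof of the wheel condition relegated to the appendix — to be the most laborious part. The genuinely delicate point is establishing the universal relations among the cofactors $\tilde{\mathcal P}_s$ that collapse the reduction's image to a single dimension; a naive root count only yields $\dim V_n\le\ell$, so some additional compatibility between the freezings (arising from the explicit factors $\Phi_s$ and the exchange symmetry $\bm x\leftrightarrow\bm y$) must be extracted to reach the sharp bound $\dim V_n\le1$.
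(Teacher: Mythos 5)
There is a genuine gap, and it sits exactly where you flag it yourself: your reduction only yields $\dim V_n\le \ell\,\dim V_{n-1}$, and the sharpening to $\dim V_n\le 1$ rests on the claim that the ratios $a_s/a_0$ of the cofactors are universal. You do not prove this, and it is not a routine verification --- it is the entire difficulty of the lemma. The paper closes this hole by a completely different device (Lemma~\ref{lemma:uniqueness_extra}): if an element of $V_n$ vanishes on the \emph{single} resonance $y_j=qx_i$, it vanishes identically. Given that, one element $\mathcal P_{n-1}$ of $V_{n-1}$ and a single scalar $\alpha$ suffice: $\mathcal P_n-\alpha\mathcal Q_n$ vanishes at $y_j=qx_i$ and hence everywhere. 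The proof of that auxiliary lemma is an iterated divisibility-and-degree-count: the vanishing at $y_j=qx_i$ propagates by symmetry to divisibility by $\prod_{i\neq j}(z_i-qz_j)$, the quotient satisfies a weakened ($r$-wheel) vanishing condition which forces a further divisibility by $\prod_{i\neq j}(z_i-q^{1+2r}z_j)$, and after finitely many steps the prescribed degree becomes negative. Nothing in your proposal plays the role of this cascade, and without it the interpolation count cannot get below $\ell$.

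A second, independent problem is your assertion that all $\ell$ cofactors $\tilde{\mathcal P}_s$ lie in $V_{n-1}$. For $s=0$ the wheel condition at $x_n=qy_n$ forces $2\ell(n-1)$ linear factors in the remaining variables, which is exactly the degree drop needed to land in $V_{n-1}$. For $s\ge 1$ the resonance $x_n=q^{1+2s}y_n$ only uses up $s$ units of the budget $s_1+s_2\le\ell-1$, so the third variable is constrained to only $\ell-s$ positions per ordering; the forced factor $\Phi_s$ is shorter, the cofactor has total degree strictly larger than $\ell(n-1)(n-2)$, and it cannot satisfy the degree constraints of $V_{n-1}$. (This is visible in the paper's appendix, where the specialization $z_j=q^{1+2r}z_i$ under the $r$-wheel condition produces only $\prod_{k}\prod_{s=0}^{\ell-1-3r}(\cdots)$.) So even the map $\mathcal P\mapsto(\tilde{\mathcal P}_0,\dots,\tilde{\mathcal P}_{\ell-1})$ into $V_{n-1}^{\oplus\ell}$ is not well defined as stated, and the induction cannot be run on the $s\ge1$ components in the form you propose.
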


The proof of the above lemma is given in Appendix~\ref{sec:uniqueness}.

\subsection{The wheel condition and Macdonald polynomials}\label{sec:wheel_macdonald}

\begin{defi}[Admissible partitions]
A partition $\lambda=(\lambda_1,\lambda_2,\ldots)$ is said to be $(r,k)$-admissible if and only if $\lambda_i - \lambda_{i+k} \geq r$ for all $i$.
\end{defi}

According to the result of Feigin et al~\cite{FJMM-Macdonald}, the space of symmetric polynomials satisfying the wheel condition is spanned by Macdonald polynomials.
More precisely:
\begin{thm}[\cite{FJMM-Macdonald}]\label{thm:FJMM}
 Let $q$ and $t$ be two generic scalars such that $q^{r-1}t^{k+1}=1$, and let $\mathcal{V}$ denote the space of symmetric polynomials in $\bm z$ satisfying the $(r,k)_{q,t}$-wheel condition.
Let $\mathcal{M}$ be the space spanned by the Macdonald polynomials $P_{\lambda} (\bm z; q, t)$ indexed by $(r,k)$-admissible partitions.
 Then $\mathcal{V}=\mathcal{M}$.
\end{thm}

In the case of relevance for our purposes this simplifies to
\begin{prop}\label{prop:FJMM}
 Let $q$ and $t$ be two generic scalars such that $q^{\ell-1}t^{3} = 1$, and let $p (\bm{z})$ be an homogeneous symmetric polynomial of total degree $\ell n (n-1)$ satisfying the $(\ell,2)_{q,t}$-wheel condition.
Then
\begin{equation}
  p(\bm z) \propto P_{\ell \delta_n} (\bm{z};q,t)\ .
\end{equation}
\end{prop}

\begin{proof}
The partition $\lambda = \ell \delta_n$ is the only $(\ell,2)$-admissible partition such that $|\lambda|= \ell n (n-1)$.
\end{proof}

Because we do not use generic $q$ and $t$ extreme care is required before we apply the theorem of Feigin et al. Let $m = \gcd(3, \ell-1)$.
The locus of the equation $q^{\ell-1}t^3=1$ splits into $m$ branches: $q^{\nicefrac{(\ell-1)}{m}}t^{\nicefrac{3}{m}}=\omega$, where $\omega$ is a $m$-th root of unity.
We consider the branch corresponding to $\omega = \exp(2 \pi i / m)$, parametrized by a variable $u$:
\begin{align}\label{eq:r_t_param}
 \begin{aligned}
  q(u)&=u^3 \\
  q(u)&=u 
 \end{aligned}
  &&
 \begin{aligned}
  t(u)&=u^{-(\ell-1)} \\
  t(u)&=u^{-\frac{\ell-1}{3}} e^{\frac{2\pi i}{3}}
 \end{aligned}
   &&
 \begin{aligned}
  \text{for } \ell &\equiv 0,2 \quad \text{(mod 3)}\\
  \text{for } \ell &\equiv 1 \quad \text{(mod 3)}\ .
\end{aligned}
\end{align}
Let $u_0$ be such that $q(u_0) = \rho_\ell^2$ and $t(u_0) = \rho_\ell$. 
Explicitly
\begin{align}\label{def:u0}
 \begin{aligned}
  u_0 &=e^{\frac{4\pi i}{3(2\ell+1)}}e^{\frac{2\pi i}{3}}\\
  u_0 &=e^{\frac{4\pi i}{2\ell+1}}\\
  u_0 &=e^{\frac{4\pi i}{3(2\ell+1)}} e^{-\frac{2\pi i}{3}}
 \end{aligned}
    &&
 \begin{aligned}
  \text{for } \ell &\equiv 0 \quad\text{(mod 3)}\\
  \text{for } \ell &\equiv 1 \quad\text{(mod  3)}\\
  \text{for } \ell &\equiv 2 \quad \text{(mod 3)}\ .
 \end{aligned}
\end{align}

In~\cite{FJMM-Macdonald}, it was shown that the coefficients $u_{\lambda \mu} (q(u),t(u))$ in the expansion~\eqref{eq:Macdonald_expansion} are well-defined rational functions in $u$.
When $u$ approaches $u_0$, the coefficients $u_{\ell \delta_n, \mu} (q(u),t(u))$ behave like $(u-u_0)^{n_{\mu}}$ for some power $n_{\mu}$.
Let $N = -\min_{\mu} \{n_{\mu}\}$, which is non-negative because $n_{\ell \delta_n} = 0$.
Then the renormalized polynomial 
\begin{equation}\label{eq:tilde_P}
\tilde P_{\ell \delta_n} (\bm z;q(u),t(u))=(u-u_0)^N P_{\ell \delta_n}(\bm z;q(u),t(u))
\end{equation}
is a regular function when $u$ approaches $u_0$.

Notice that $\tilde P_{\ell \delta_n} (\bm z; q(u),t(u))$ is an homogeneous symmetric polynomial of total degree $\ell n (n-1)$ satisfying the wheel condition when $u \neq u_0$.
 The limit $\lim_{u \to u_0} \tilde P_{\ell \delta_n} (\bm z;q(u),t(u))$ is well-defined, and it is simply given by:
\begin{equation}
  \lim_{u \to u_0} \tilde P_{\ell \delta_n}(\bm z;q(u),t(u)) = \tilde P_{\ell \delta_n} (\bm z; \rho_\ell^2,\rho_\ell)\ .
\end{equation}

\begin{lemma}
 The polynomial $\tilde P_{\ell \delta_n} (\bm z; \rho_\ell^2, \rho_\ell)$ satisfies the $(\ell,2)_{\rho_\ell^2, \rho_\ell}$-wheel condition.
\end{lemma}

\begin{proof}
 This lemma follows from the regularity of $\tilde P_{\ell \delta_n} (\bm z; q(u), t(u))$ when $u$ approaches $u_0$.
\end{proof}

\subsection{The proof of the main theorem}
Let
\begin{align}
 [\ell]_q! &= [\ell]_q [\ell-1]_q \ldots [1]_q\ , & \text{where}\ [a]_q &=\frac{q^a-1}{q-1}\ .
\end{align}
\begin{prop}\label{prop:propto}
 The constant $\gamma_{n,\ell}$, defined in Theorem~\ref{thm:main}, is given by:
\begin{equation}
 \gamma_{n,\ell} = \left((-1)^{\binom{\ell}{2}} \rho_\ell^{2\binom{\ell}{2}(2n-3)}[\ell]_{\rho_\ell^2}!\right)^n\ .
\end{equation}
\end{prop}

In order to prove this proposition, in Appendix~\ref{app:leading} we compute the coefficient of $\prod_{i=1}^n (x_i y_i)^{\ell (i-1)}$ in the partition function.
Since this coefficient is non-zero, the Macdonald polynomial 
$P_{\ell \delta_n} (\bm x, \bm y; \rho_{\ell}^2, \rho_{\ell})$ is well-defined 
and 
\begin{equation}
P_{\ell \delta_n}(\bm x, \bm y; \rho_{\ell}^2, \rho_{\ell}) =\tilde  P_{\ell \delta_n} (\bm x, \bm y; \rho_{\ell}^2, \rho_{\ell})\ .
\end{equation}
\begin{proof}[Proof of Theorem~\ref{thm:main}]
 Note that the definition of $V_n$ is such that $\mathcal{Z}_{n,\ell}(\bm{x},\bm{y}) \in V_n$, and $P_{\ell \delta_n} (\bm{x},\bm{y};\rho_\ell^2, \rho_\ell)\in V_n$. 
 But by Lemma~\ref{lemma:uniqueness}, $V_n$ is at most one dimensional, and therefore $\mathcal{Z}_{n,\ell} (\bm{x},\bm{y}) = \gamma_{n,\ell}P_{\ell \delta_n}(\bm{x},\bm{y};\rho_\ell^2, \rho_\ell)$, with coefficient $\gamma_{n,\ell}$ computed in Proposition~\ref{prop:propto}.
\end{proof}

\section{Final remarks}

\subsection{Combinatorial interpretation}

A canonical higher spin generalization of an alternating sign matrix (ASM), as defined by Behrend and Knight~\cite{Higher-ASM}, is a square matrix with entries in $\{-\ell, \ldots, -1, 0, 1, \ldots, \ell\}$, such that sum of all entries in each row or column is $\ell$, and the partial sum of the first or last $r$ entries in each row or column is non-negative for each $r$. The bijection between such matrices and the configurations of the model considered in this paper can be established similarly to the case $\ell=1$:
reading each column of the grid from the bottom to the top, if the value of the edge goes from $\beta$ to $\eta$, the corresponding entry of the higher spin ASM is $\eta-\beta$.  Alternatively, the constraint \eqref{eq:higher_config_constraint} on a configuration guarantees that the same result is obtained by reading each row from right to left. Unfortunately, the weights considered in this paper seem very unnatural in the combinatorial setting.\medskip

By setting all variables of the partition function $\mathcal Z_{n}(\bm x,\bm y)$ to be equal to $1$ at the combinatorial point $q=\exp(2\pi i/3)$, we get the famous sequence
\begin{equation}
1, 1, 2, 7, 42, 429, 7436, 218348, ... \quad \mbox{(sequence A005130 in OEIS)}
\end{equation}
that counts several objects, including alternating sign matrices and totally symmetric self-complementary plane partitions. We have seen that the combinatorial point $q=\exp((2\ell+1)/2\pi i)$, the higher spin partition function $\mathcal Z_{n,\ell}(\bm x,\bm y)$ is a Macdonald polynomial. Therefore, it would be interesting to find some combinatorial interpretation of the values obtained for $\ell > 1$ in the homogeneous limit when $x_i = y_i =1$ for all $i$.

\subsection{The wheel condition}

The wheel condition for the higher spin partition function is deduced using the special determinantal form $\mathcal Z_{n,\ell}(\bm x,\bm y)$ that involves rational function entries with a specific dependence of the crossing parameter $q$. This motivates a more systematical study of the connection between the wheel condition and more general determinants of a similar type. See, for example \cite{Lascoux_Gaudin}, for a different generalization of the Korepin--Izergin determinant that satisfies a wheel condition.
\medskip

There are other integrable statistical models that are of interest from this point of view, such as, for example, the $so_n$ model in~\cite{DowFoda} and the eight-vertex model in \cite{eight_vertex_1,eight_vertex_2}.

\subsection{Symmetry}

It is very intriguing that we start with a function that exhibits an $S_n \times S_n$ symmetry but not $S_{2n}$ symmetry in general. It is natural to ask
what the special features of this determinant are that imply the extra symmetry at $q=\exp((2\ell+1)/2\pi i)$. It would be expected that the symmetry comes from the physics of the model, that is, there exists some mechanism (like the Yang--Baxter equation) which, for this very special value of $q$, allows to exchange a row with a column. Another idea to explain the symmetry might come from  Stroganov's article~\cite{Stroganov-IK-sym}.

\subsection{Relation to KP $\tau$-functions}
The alternative representation \eqref{eq:higher_r_det} of the partition function
has a Grassmannian manifold interpretation, and this point of view is intimately connected with the fact that the $6$-vertex model with domain wall boundary conditions can be seen as a $\tau$-function of the KP hierarchy, where the spectral parameters $\bm{x}$ and $\bm{y}$ play the role of Miwa variables associated to the commuting flows of the hierarchy. 
This was first observed by Foda et al.~\cite{six_vertex_KP}, based on a fermionic vacuum expectation value representation of the partition function. 
In his survey paper~\cite{Takasaki_KP}, Takasaki extends this and other related results by showing how partition functions of certain 2D solvable models and scalar products of Bethe vectors from integrable spin chain models can also be written as KP $\tau$-functions.
\subsection*{Acknowledgements} The authors thank the Centre de Recherches Math\'emati\-ques in Montr\'eal where most of the research presented here was carried out.
The work of T.~F.~was partially supported by ANR project DIADEMS (Developing an Integrable Approach to Dynamical and Elliptic Models).
The work of F.~B.~was partially supported by the FP7 IRSES project RIMMP (Random and Integrable models in Mathematical Physics), the ERC project FroM-PDE (Frobenius Manifolds and Hamiltonian Partial Differential Equations) and the MIUR Research Project  Geometric and analytic theory of Hamiltonian systems in finite and infinite dimensions.

The authors are grateful to the referees for their constructive remarks to improve the structure and the readability of the paper.
\appendix

\section{Uniqueness} \label{sec:uniqueness}

The goal of this section is to prove Lemma~\ref{lemma:uniqueness}. 
That is, if there exists a nonzero polynomial $p (\bm x, \bm y) \in V_n$, then it is unique up to a multiplicative constant.
 
 Let $\bm z = \{z_1, \ldots, z_{2n}\} = \{ \bm x, \bm y\}$ and $q = \rho_\ell$. 
 We generalize the $(\ell,2)_{q^2, q}$-wheel condition:
\begin{defi}[$r$-wheel condition]
 Let $r$ be such that $3r < \ell$.
 A polynomial $p(\bm z)$ is said to obey the $r$-wheel condition if $p(\bm z)$ vanishes whenever
	\begin{equation}
   z_k = q^{1+2r+2s_2} z_j = q^{2+4r+2s_1+2s_2} z_i 
  \end{equation}
 for any $s_1$, $s_2 \in \mathbb N$ such that $s_1+ s_2 \leq \ell-1-3r$, and any choice of $1 \leq i < j < k \leq 2n$.
\end{defi}

 The following lemma holds:
\begin{lemma}\label{lemma:uniqueness_extra}
 Let $a (\bm x, \bm y) \in V_n$ be such that $\left.a (\bm x, \bm y)\right|_{y_j=qx_i} = 0$ for a given $i$ and $j$, then $a(\bm x, \bm y) = 0$.
\end{lemma}

\begin{proof}
 The result is trivial when $n=1$.
 Assume $n>1$.
 By symmetry, $a(\bm x, \bm y) \propto \prod_{i,j} (y_j - q x_i)(x_i - q y_j)$, therefore, there exists $b(\bm x, \bm y)$ such that 
\begin{equation}
 a(\bm x, \bm y) = \left(\prod_{i,j} (y_j - q x_i) (x_i-q y_j)\right) b(\bm x, \bm y)\ .
\end{equation}
The polynomial $b(\bm x, \bm y)$ has total degree at most $(\ell-2)n(n-1)-2n$ and partial degree at most $(\ell-2)(n-1)-2$ in each variable $x_i$ or $y_i$.
 When $\ell \leq 2$, $b(\bm x, \bm y) = 0$ and the lemma is proved.

 By the wheel condition, the following holds:
\begin{equation}
 \left.b (\bm{x},\bm{y})\right|_{x_j=q x_i} = \left(\prod_{k\neq i,j} \prod_{s=1}^{\ell}(x_k-q^{2s}x_i)\right) \left(\prod_k \prod_{s=2}^{\ell-1} (y_k - q^{2s}x_i)\right) b^\prime(\bm{x},\bm{y})\ .
\end{equation}
 The polynomial $b^\prime (\bm x, \bm y)$ has partial degree at most $-2n$ in $x_i$, and therefore it vanishes identically.
Thus
\begin{equation}
 a (\bm x, \bm y) = w_1 (\bm x, \bm y) a_1 (\bm x, \bm y)\ ,
\end{equation}
for some polynomial $a_1 (\bm x, \bm y)$ obeying the $1$-wheel condition, and 
\begin{equation}
 w_\alpha (\bm z) = \prod_{i \neq j} (z_i - q^\alpha z_j) \ .
\end{equation}
The polynomial $a_1 (\bm z)$ has total degree at most $(\ell-4)n(n-1)-2n$ and it has partial degree at most $(\ell-4)(n-1)-2$ in each variable $x_i$ or $y_i$.

By the $1$-wheel condition, the following holds:
\begin{equation}
 \left.a_1 (\bm z)\right|_{z_j=q^3 z_i} = \left(\prod_{k \neq i,j} \prod_{s=0}^{\ell-4} (z_k-q^{6+2s}z_i)\right) a^\prime_1 (\bm z)\ .
\end{equation}
The polynomial $a^\prime_1 (\bm z)$ has partial degree at most $-2(n-1) - 4$ in $z_i$, and therefore it vanishes identically.
Thus
\begin{equation}
 a_1 (\bm z) = w_3 (\bm z) a_2 (\bm{z})\ .
\end{equation}

This procedure can be iterated.
In the $r$-th step, we define $a_r (\bm z)$ by
\begin{equation}
 a_{r-1} (\bm z) = w_{2r-1} (\bm z) a_r (\bm z)\ .
\end{equation}
The polynomial $a_r (\bm z)$ obeys the $r$-wheel condition, has total degree at most $(\ell-4r)n(n-1) - 2rn$ and it has partial degree at most $(\ell-4r)(n-1) - 2r$ in each variable $z_i$.
The $r$-wheel condition implies:
\begin{equation}
 \left.a_r (\bm z)\right|_{z_j=q^{1+2r} z_i} = \left(\prod_{k \neq i,j} \prod_{s=0}^{\ell-1-3 r} (z_k-q^{2+4 r+2s}z_i)\right) a_r^\prime (\bm z)\ .
\end{equation}
The polynomial $a_r^\prime (\bm z)$ viewed as a function of $z_i$ has degree at most $-2r(n-1)-4r$, and therefore it vanishes identically.
This closes the iteration step.

We should stop the iteration at $r^* = \min \{ r \in \mathbb N\ \text{such that}\ 3r \geq \ell\}$.
The polynomial $a_{r^*} (\bm z)$ has negative total degree and therefore it vanishes identically.
\end{proof}

\begin{proof}[Proof of Lemma~\ref{lemma:uniqueness}]
 Use induction on $n$.
 Let $p_n (\bm x, \bm y) \in V_n$.

 The lemma holds for $n=1$. 
 By the wheel condition the following holds
 \begin{align*}
 \left.p_n (\bm x, \bm y) \right|_{y_j=qx_i} &= \left(\prod_{k\neq i} \prod_{s=1}^{\ell} (x_k-q^{2s}x_i)\right) \left(\prod_{k\neq j} \prod_{s=1}^{\ell}(y_k-q^{2s}x_i)\right) \hat p_n (\bm x, \bm y)\ .
 \end{align*}
 The polynomial $\hat p_n (\bm x, \bm y)$ does not depend either on $x_i$ or on $y_j$. It can then be checked that $\hat p_n (\bm z)\in V_{n-1}$, which by hypothesis is one-dimensional.

 For any nonzero polynomial $p_n^\prime (\bm x, \bm y) \in V_n$ there is a constant $\alpha$ such that 
\begin{equation}
\left.\left(p_n (\bm x, \bm y) - \alpha\, p_n^\prime (\bm x, \bm y)\right)\right|_{y_j=qx_i} = 0\ .
\end{equation}
Apply Lemma~\ref{lemma:uniqueness_extra}.
\end{proof}

\section{The coefficient $\gamma_{n,\ell}$} \label{app:leading}
\begin{prop}
The recursion identity
\begin{equation}
\label{eq:z_recursion}
\left. \mathcal{Z}_{n,\ell} (\bm x,\bm y)\right|_{x_n=y_n=0}
=(-1)^{\binom{\ell}{2}}q^{4\binom{\ell}{2}(n-1)-\binom{\ell}{2}}[\ell]_{q^2}!  \left(\prod_{k=1}^{n-1}x_k^{\ell} y_k^{\ell}\right) \mathcal{Z}_{n-1,\ell} (\hat{\bm x},\hat{\bm y})
\end{equation}
holds, where $\hat{\bm x} = (x_1,\dots, x_{n-1})$ and $\hat{\bm y} = (y_1,\dots, y_{n-1})$.
\end{prop}
\begin{proof} The determinant representation \eqref{eq:higher_r_det} of $ \mathcal{Z}_{n,\ell} (\bm x,\bm y)$ implies that
\begin{equation}
\left. \mathcal{Z}_{n,\ell} (\bm x,\bm y)\right|_{x_n=y_n=0}
=\det\left(\left\langle \left.\tilde r_i(\bm x; z)\right|_{x_n=0},\frac{z^{j-1}}{w(\bar{\hat{\bm y}};z)z^{\ell}} \right\rangle\right)_{i,j=1}^{n\ell}\ .
\end{equation}
Observe that
\begin{equation}
 \left. \tilde r_{\ell+i} (\bm x; z) \right|_{x_n=0} = z^{\ell+1} \tilde r_{i} (\hat{\bm x};z)\qquad i\geq 1\ .
\end{equation}
Note also, assuming $y_i\not=0$ for $1\leq i\leq n-1$, that
\begin{equation}
\sp\left\{\frac{z^{j-1}}{w(\bar{\hat{\bm y}};z)z^{\ell}}\right\}_{j=1}^{n\ell} = \sp\left(\left\{\frac{1}{z^{\ell-j+1}}\right\}_{j=1}^{\ell}\cup\left\{\frac{z^{j-1}}{w(\bm y;z)}\right\}_{j=1}^{(n-1)\ell}\right)\ ,
\end{equation}
with the change of basis
\begin{equation}
\label{eq:matrix_M}
\frac{z^{i-1}}{w(\bar{\hat{{\bm y}}};z)z^{\ell}}= \sum_{j=1}^{\ell}M_{i, j}(\hat{\bm y})\frac{1}{z^{\ell-j+1}}+\sum_{j=1}^{(n-1)\ell}M_{i,\ell+j}(\hat{\bm y})\frac{z^{j-1}}{w(\bar{\hat{{\bm y}}};z)}\ ,
\end{equation}
where the matrix $M(\hat{\bm y})$ defined by \eqref{eq:matrix_M} is upper triangular, and its determinant is equal to
\begin{equation}
\det(M(\hat{\bm y})) = \frac{1}{w(\bar{\hat{\bm y}};0)^\ell}=(-1)^{(n-1)\ell}\frac{q^{-(n-1)\ell^2(\ell-1)}}{\prod_{k=1}^{n-1}y_k^{\ell^2}}\ .
\end{equation}
A simple calculation gives that
\begin{multline}
\left\langle \left.\tilde r_i(\bm x; z)\right|_{x_n=0},\frac{1}{z^{\ell-j+1}} \right\rangle\\
=\left\{
\begin{array}{cc}
0 & i > \ell-j+1\\
\displaystyle(-1)^{n-1}q^{n(\ell-1)+i-2\ell+1}[\ell-i+1]_{q^2}\prod_{k=1}^{n-1}x_k& i=\ell-j+1
\end{array}
\right.\ .
\end{multline}
By combining the above, we obtain
\begin{align}
\left. \mathcal{Z}_{n,\ell} (\bm x,\bm y)\right|_{x_n=y_n=0}&=
\det(M(\hat{\bm y}))\det\left(\prod_{j=1}^{n}\left\langle \left.\tilde r_i(\bm x; z)\right|_{x_n=0},\frac{1}{z^{\ell-j+1}} \right\rangle\right)_{i,j=1}^{\ell} \times\\
&\quad \times \det\left(\left\langle z^{\ell+1}\tilde r_i(\hat{\bm x}; z),\frac{z^{j-1}}{w(\bar{\hat{\bm y}};z)} \right\rangle\right)_{i,j=1}^{(n-1)\ell}\ .
\end{align}
A short calculation gives
\begin{multline}
\det\left(\prod_{j=1}^{n}\left\langle \left.\tilde r_i(\bm x; z)\right|_{x_n=0},\frac{1}{z^{\ell-j+1}} \right\rangle\right)_{i,j=1}^{\ell}\\
= (-1)^{\binom{\ell}{2}+(n-1)\ell}q^{n(\ell-1)\ell-\frac{3\ell(\ell-1)}{2}}[\ell]_{q^2}! \left(\prod_{k=1}^{n-1}x_k\right)^{\ell}\ ,
\end{multline}
and we also have
\begin{multline}
\det\left(\left\langle z^{\ell+1}\tilde r_i(\hat{\bm x}; z),\frac{z^{j-1}}{w(\bar{\hat{\bm y}};z)} \right\rangle\right)_{i,j=1}^{(n-1)\ell}\\ = q^{(\ell-1)\ell(\ell+1)(n-1)}\left(\prod_{k=1}^{n-1}y_k^{\ell(\ell+1)}\right)\mathcal{Z}_{n-1,\ell} (\hat{\bm x},\hat{\bm y})\ ,
\end{multline}
from which the recursion formula \eqref{eq:z_recursion} follows.
\end{proof}

\begin{proof}[Proof of Prop.~\ref{prop:propto}]
Note that
\begin{equation}
\tilde r_i(x;z) = q^{i-\ell}[\ell-i+1]_{q^2}z^{i-1} +\mbox{lower order terms,}
\end{equation}
and hence
\begin{align}
\mathcal{Z}_{1,\ell} (x,y)&=q^{-\binom{\ell}{2}}[\ell]_{q^2}!\det\left(\left\langle z^{i-1},\frac{z^{j-1}}{w(\bar{y};z)} \right\rangle\right)_{i,j=1}^{n\ell}\\
&=(-1)^{\binom{\ell}{2}}q^{-\binom{\ell}{2}}[\ell]_{q^2}!\ .
\end{align}

The relation \eqref{eq:z_recursion} can be used recursively to compute the coefficient of the leading term $\prod_{i=1}^n (x_i y_i)^{\ell (i-1)}$ in $\mathcal{Z}_{n,\ell} (\bm x,\bm y)$ which gives
\begin{equation}
\left((-1)^{\binom{\ell}{2}} q^{\binom{\ell}{2}(2n-3)}[\ell]_{q^2}!\right)^n\ ,
\end{equation}
that coincides with the proportionality constant $\gamma_{n,\ell}$ since, by definition, the Macdonald polynomial $P_{\ell\delta_n}(\bm x,\bm y; q,t)$ has leading coefficient $1$.
\end{proof}

\bibliography{DN}
\bibliographystyle{amsplainhyper}
\end{document}